\newtheorem{sat}{{\sc Theorem}}[section]
\newtheorem{lem}[sat]{{\sc Lemma} }
\newtheorem{kor}[sat]{Corollary} 
\newtheorem{defi}[sat]{Definition}   
\newcounter{saveeqn}
\newcommand{\arabeqn}{\setcounter{saveeqn}{\value{equation}}
\setcounter{equation}{0}
\renewcommand{\theequation}{\mbox{\arabic{section}.
\arabic{equation}}}}
\title{Density of spaces of trigonometric polynomials with frequencies from a subgroup in $L^\alpha$-spaces
}
\date{}
\begin{document}
\author{{\bf {\footnotesize BY}}\\ {\bf {\normalsize J. M. Medina}}~{\sc (Buenos Aires)},
{\bf {\normalsize L. Klotz}}~{\sc (Leipzig)}\\
{\footnotesize AND} {\bf {\normalsize M. Riedel}}~{\sc (Leipzig)}}
\maketitle
\noindent
{\it Abstract.} {\small 
Let $G$ be an LCA group, $H$ a closed subgroup, $\varGamma$ the dual group of $G$ and $\mu$ be a regular finite non-negative Borel measure
on  $\varGamma$. We give some necessary and sufficient conditions for the density of the set of trigonometric polynomials
on $\varGamma$ with frequencies from $H$ in the space  $L^\alpha(\mu), \alpha \in (0,\infty)$. 
}\\[0.2cm]
{\it Keywords and phrases:} LCA groups, regular measure, $L^\alpha$-space, trigonometric approximation, sampling.\\[0.2cm]
{\it 2010 Mathematics Subject Classifications:} 43A15, 42A10, 43A05, 43A25, 94A20.
\setcounter{section}{0}
\section{Introduction}
\arabeqn
\noindent
Let $G$ be an LCA group, i.~e. a locally compact abelian group with Hausdorff topology, whose group operation is written additively.
Denote  by $\varGamma$  the dual group of $G$ and by $\langle\gamma,x\rangle$ the value of $\gamma\in \varGamma$ at $x \in G$. 
Let $H$ be a closed subgroup of $G$ and $ \varLambda:=\{\gamma \in \varGamma: \langle\gamma,y \rangle=1$ for all $y\in H \}$ its annihilator.   Recall that $\varLambda$ is a closed subgroup 
of $\varGamma$ and 
that the factor group $\varGamma/ \varLambda$ is (algebraically and topologically) isomorphic to the dual group of $H$, cf. \cite{Hewitt63}.

A trigonometric $H$-polynomial is a function $p:\varGamma\to \mathbb{C}$, which is a finite sum of the form 
$p(\cdot)=\sum a_k\langle\cdot,y_k\rangle$, where $a_k\in  \mathbb{C}$, $y_k \in H$. Denote by $\mathbf{P}(H)$ the linear space of all trigonometric 
$H$-polynomials.

If $\mu$ is a regular finite  Borel measure on $\varGamma$ and $\alpha \in (0,\infty)$, let $L^\alpha(\mu)$ be the metric space
of ($\mu$-equivalence classes of) Borel measurable $\mathbb{C}$-valued functions on $\varGamma$, which are $\alpha$-integrable with respect to $\mu$. For $\alpha \in (1,2]$, 
the space $L^\alpha(\mu)$ can be interpreted  as the spectral domain of a harmonizable symmetric $\alpha$-stable process, particularly, 
$L^2(\mu)$ is the spectral domain of a certain stationary Gaussian process. From  the prediction theory of such processes 
it arises the problem to describe those measures, for which  $\mathbf{P}(H)$ is dense in  $L^\alpha(\mu)$. The paper \cite{Klotz05}  
is an  special  case when $G=\mathbb{Z}$ and  prompts a result of the following form.

We say that a measure $\mu$ is concentrated on a transversal if there exists  a Borel subset $D$ of $\varGamma$ such that $\mu(\varGamma\setminus D)=0$ and 
$D\cap (\lambda+D)= \emptyset$ for all $\lambda \in \varLambda\setminus \{0\}$. Then  $\mathbf{P}(H)$ is dense in  $L^\alpha(\mu)$ for all 
$\alpha \in (0,\infty)$ if and only if  $\mu$ is concentrated on a transversal. The main goal of the present paper is to show that under several additional 
assumptions on $\varGamma$  or $\varLambda$ or $\mu$ the condition that $\mu$ is concentrated on a transversal is, indeed, a necessary or sufficient 
condition for  the density of $\mathbf{P}(H)$.

Moreover,  this description is equivalent to the problem of sampling and reconstructing a harmonizable symmetric $\alpha$-stable process. The first result in this direction was given by Lloyd \cite{Lloyd59} 
for wide sense stationary processes on $G=\mathbb{R}$ generalizing the  Whittaker-Kotel'nikov-Shannon (WKS) 
sampling theorem for $L^2(\mathbb{R})$ band limited functions. If $G$ is an LCA group, the WKS theorem was proved by Kluv\'{a}nek \cite{Kluva65} for $L^2 (G)$ and by Lee \cite{Lee78} for rather general processes of finite variance. In the context of  Hilbert spaces, other advances about sampling  in $L^2(G)$ can be found in e.g.
\cite{Fuhr2007} and for continuous groups of operators in e.g. \cite{Fer2015} .   
   
Section 2 contains definitions and basic facts from measure theory on topological spaces. Particularly, we prove  
some results on regular measures. Although they are simple, 
it seems that they are not often stated explicitly in the literature. 
We also recall some properties of transition probabilities, which are applied in Section 4.

The main result of Section 3 is the following. The space $\mathbf{P}(H)$ is dense in $L^\alpha(\mu)$ for all regular 
finite non-negative Borel measures $\mu$, which are concentrated on a transversal, and all $\alpha \in (0,\infty)$  if the annihilator group $\varLambda$ is metrizable. 
We mention that $\varLambda$ is metrizable if and only if the factor group $G/H$ is $\sigma$-compact, 
cf. \cite[Theorem~29]{Morris77},
and  that according to a fundamental structure theorem of LCA groups any compactly generated group is  
$\sigma$-compact, cf. \cite[(9.8)]{Hewitt63}.

Section 4 is devoted to the assertions that in case of a Polish space $\varGamma$ or a countable set $\varLambda$
the measure  $\mu$ is concentrated on a transversal if $\mathbf{P}(H)$ is dense in  $L^\alpha(\mu)$ for some $\alpha \in (0,\infty)$. If
$\varGamma$ is a Polish space, the proof heavily leans on properties of transition probabilities. If $\varLambda$ is countable, 
the corresponding proof for the circle group, cf. \cite[Theorem~3.4]{Klotz05}, can be adapted straightforwardly. 
\section{Some preliminaries from Borel measures on topological spaces}
\arabeqn
\noindent
We recall some definitions and facts from measure theory on topological spaces 
since a few of the notions are used in different ways in the literature.

Let $X$  be a topological Hausdorff space and $\mathbf{B}(X)$ the Borel $\sigma$-algebra of $X$, i.~e. $\mathbf{B}(X)$ 
is the $\sigma$-algebra
generated  by the open subsets of $X$. If $S$ is a subspace of $X$, then $\mathbf{B}(X)\cap S=\mathbf{B}(S)$, cf. \cite[13.5]{Part77}. 
Particularly, if $ S\in \mathbf{B}(X)$, then $\mathbf{B}(X)\cap S:=\{B\in \mathbf{B}(X): B\subseteq S\}$. The symbol $1_{S}$ stands for the indicator function of the set $S$.

A finite non-negative Borel measure on $\mathbf{B}(X)$ is called regular if for all  $B \in \mathbf{B}(X)$ and all $\varepsilon >0$,
there exist a compact set $C$ and an open set $U$ such that $C\subseteq B\subseteq U$ and $\mu(U\setminus C) <\varepsilon$, 
cf. \cite [p. 206]{Cohn80}. 
It is called  discrete if $\mu(X\setminus S)=0$ for some countable subset $S$.  Obviously, any discrete measure is regular. 
A $\mathbb{C}$-valued measure $\nu$ on $\mathbf{B}(X)$ is called regular if its variation, which  is denoted by $|\nu|$, is regular.
If $Y$ is a topological Hausdorff space and $\pi: X\to Y $ is a $(\mathbf{B}(X),\mathbf{B}(Y) )$-measurable map from $X$ to $Y$, we 
denote by $\nu\pi^{-1}$ the image measure of $\nu$ under $\pi$.

\begin{lem} \label{l2.1}
Let $X$ and $Y$ be topological Hausdorff spaces, $\mu$ and $\nu$ be a finite non-negative and a $\mathbb{C}$-valued, respectively, measure
on $\mathbf{B}(X)$ and  $\pi: X\to Y $  a continuous map. The following assertions are true.
\renewcommand{\labelenumi}{(\roman{enumi})}
\begin{enumerate}
\item If $S\in  \mathbf{B}(X)$ and $\mu$ is regular, then the restriction of  $\mu$ to $\mathbf{B}(S)$ is regular.
\item If   $\mu$ is regular, then $\mu\pi^{-1}$ is regular.
\item If  $\nu$ is absolutely continuous with respect to $\mu$ and $\mu$ is regular, then $\nu$ is regular. 
\item If   $\nu$ is regular, then $\nu\pi^{-1}$ is regular.
\item If $(\mu_k)_{k\in \mathbb{N}}$ is a sequence of regular  non-negative measures on $ \mathbf{B}(X)$ such that 
$\mu=\sum_{k=1}^{\infty}\mu_k$, then $\mu$ is regular.
\end{enumerate}
 \end{lem}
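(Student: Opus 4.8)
The plan is to treat the five assertions essentially independently, since each is a short consequence of the definition of regularity together with standard approximation arguments, and to arrange them so that later parts may lean on earlier ones. For (i), given $B \in \mathbf{B}(S)$ and $\varepsilon > 0$, apply regularity of $\mu$ on $X$ to obtain a compact $C$ and an open (in $X$) set $U$ with $C \subseteq B \subseteq U$ and $\mu(U \setminus C) < \varepsilon$. Since $B \subseteq S$ we automatically have $C \subseteq S$, and $U \cap S$ is open in $S$ with $B \subseteq U \cap S \subseteq U$, so $\mu((U\cap S)\setminus C) \le \mu(U \setminus C) < \varepsilon$; note $C$ is still compact as a subset of $S$ because compactness is intrinsic. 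For (ii), let $B \in \mathbf{B}(Y)$ and $\varepsilon > 0$. Since $\pi$ is continuous, $\pi^{-1}(B) \in \mathbf{B}(X)$, so choose compact $C$ and open $U$ in $X$ with $C \subseteq \pi^{-1}(B) \subseteq U$ and $\mu(U \setminus C) < \varepsilon$. Then $\pi(C)$ is compact in $Y$ and $\pi(C) \subseteq B$; the mild obstacle is that $\pi(U)$ need not be open and $\{y : \pi^{-1}(\{y\}) \subseteq U\}$ need not be Borel in a usable way, so instead I would argue directly: $\mu\pi^{-1}(B \setminus \pi(C)) = \mu(\pi^{-1}(B) \setminus \pi^{-1}(\pi(C))) \le \mu(\pi^{-1}(B)\setminus C) < \varepsilon$, which gives inner regularity by compact sets, and then deduce outer regularity by applying the inner estimate to the complement $Y \setminus B$ (using finiteness of $\mu$ to pass between $\mu(B \setminus C') $ and $\mu(U' \setminus B)$ via $U' := Y \setminus C'$). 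I should remark once that for finite measures inner regularity by compacta is equivalent to the two-sided definition given in the text, so it suffices to verify the inner half in (ii) and (iv).

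For (iii), suppose $\nu \ll \mu$ with Radon--Nikodym derivative $f = d\nu/d|\mu|$ (work with $|\nu|$, so $|\nu| = \int |f|\,d\mu$ with $f \ge 0$). Given $B$ and $\varepsilon > 0$, the measures $B \mapsto \int_B f\,d\mu$ are finite, so by the $\sigma$-additivity of $\nu$ one finds a bounded truncation: choose $n$ so that $\int_{\{f > n\}} f\,d\mu < \varepsilon/2$. On the set where $f \le n$, use regularity of $\mu$ to pick $C \subseteq B \cap \{f \le n\} \subseteq U$ with $\mu(U \setminus C) < \varepsilon/(2n)$; then $|\nu|(U \setminus C) \le n\,\mu(U\setminus C) + \int_{\{f>n\}} f\,d\mu < \varepsilon$, after enlarging $U$ to still contain $B$ (replace $U$ by $U \cup (\text{open nbhd carrying little } \nu\text{-mass})$, or more cleanly apply the estimate to $B$ directly by splitting $B = (B\cap\{f\le n\}) \cup (B \cap \{f>n\})$ and using outer regularity of $\mu$ on the small piece). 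Assertion (iv) is then immediate by combining (iii)-type reasoning with (ii): $\nu\pi^{-1}$ has variation dominated by $|\nu|\pi^{-1}$, which is regular by (ii) since $|\nu|$ is regular by hypothesis, and a $\mathbb{C}$-valued measure whose variation is dominated by a regular finite measure is regular — or simply note $|\nu\pi^{-1}| \le |\nu|\pi^{-1}$ and invoke (iii) with $\mu = |\nu|\pi^{-1}$.

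Finally, for (v), let $\mu = \sum_{k=1}^\infty \mu_k$ with each $\mu_k$ regular and $\mu$ finite. Given $B$ and $\varepsilon > 0$, choose $N$ with $\sum_{k > N} \mu_k(X) < \varepsilon/3$. For each $k \le N$ pick compact $C_k$ and open $U_k$ with $C_k \subseteq B \subseteq U_k$ and $\mu_k(U_k \setminus C_k) < \varepsilon/(3N)$. Set $C := \bigcup_{k \le N} C_k$ (compact, $\subseteq B$) and $U := \bigcap_{k \le N} U_k$ (open, $\supseteq B$). Then for $k \le N$, $\mu_k(U \setminus C) \le \mu_k(U_k \setminus C_k) < \varepsilon/(3N)$, while $\sum_{k>N}\mu_k(U\setminus C) \le \sum_{k>N}\mu_k(X) < \varepsilon/3$, so $\mu(U \setminus C) < 2\varepsilon/3 < \varepsilon$. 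The only point requiring a word of care throughout is the interchange between the symmetric two-sided definition of regularity in the text and the more convenient "inner regularity by compact sets for finite measures" formulation; I expect the main (still very minor) obstacle to be handling the image measure in (ii) cleanly without appealing to any surjectivity or openness of $\pi$, which the argument above sidesteps by estimating on preimages.
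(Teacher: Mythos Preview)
Your proposal is correct and for parts (i), (ii), (iv), (v) follows essentially the same route as the paper: intersect the open set with $S$ in (i); push forward compacta and recover outer regularity from the complement in (ii); combine (ii) and (iii) via $|\nu\pi^{-1}|\le |\nu|\pi^{-1}$ in (iv); truncate the tail of the series and take finite unions/intersections in (v).

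The one genuine difference is in (iii). You argue via a Radon--Nikodym derivative $f=d|\nu|/d\mu$, truncate at level $n$ so that $\int_{\{f>n\}} f\,d\mu<\varepsilon/2$, and then use regularity of $\mu$ on the bounded-density part; as you note, this requires a small repair to get an open set that actually contains all of $B$, which you can do by first establishing inner regularity and then passing to the complement. The paper bypasses this entirely by invoking the $\varepsilon$--$\delta$ form of absolute continuity (for every $\varepsilon>0$ there is $\delta>0$ with $\mu(B)<\delta\Rightarrow |\nu|(B)<\varepsilon$), which makes (iii) a one-line consequence: choose $C\subseteq B\subseteq U$ with $\mu(U\setminus C)<\delta$ and conclude $|\nu|(U\setminus C)<\varepsilon$ directly. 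Your argument is self-contained and avoids citing that lemma, but the paper's is considerably shorter and avoids the patching you flagged as awkward.
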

\begin{proof}
(i) Let $ B \in  \mathbf{B}(S)$. Since $\mu$ is regular, for $\varepsilon >0$ there exist a compact set $C$ 
and an open set $U$ such that $C\subseteq B\subseteq U$ and $\mu(U\setminus C) <\varepsilon$. Since $U_S:=U\cap S$ is 
an open subset of $S$ and $\mu(U_S\setminus C)\le \mu(U\setminus C)$, the assertion follows.\\
(ii) Let $ \tilde{B} \in  \mathbf{B}(Y)$ and $B:=\pi^{-1}( \tilde{B} )$. Let $\varepsilon >0$ and $C\subseteq B$ be compact with 
$\mu(B\setminus C) < \frac{\varepsilon}{2}$. Then $\mu\pi^{-1}( \tilde{B}\setminus \pi(C))\le \mu(B\setminus C) < \frac{\varepsilon}{2}$
for the compact subset  $\pi(C)$ of $Y$. It follows that for the set $\tilde{B}^{c}:=Y\setminus \tilde{B}$, there exists 
a compact  set $\tilde{C}$ satisfying $\tilde{C}\setminus \tilde{B}^{c} $ and
$\mu( \tilde{B}\setminus \tilde{C} ) <\frac{\varepsilon}{2}$ The set  $\tilde{C}^{c}$  is open  since $Y$ was assumed to be a Hausdorff
space. We obtain $\pi(C)\subseteq\tilde{B}\subseteq\tilde{C}^{c}$ and 
$\mu\pi^{-1}(\tilde{C}^{c}  \setminus  \pi(C))\le \mu\pi^{-1}(\tilde{C}^{c}  \setminus  \tilde{B})+
\mu\pi^{-1}( \tilde{B} \setminus\pi(C)) < \varepsilon$, which implies that  $\mu\pi^{-1}$ is regular.\\
(iii) The result is an immediate consequence of the fact that if $\nu$ is absolutely continuous with respect to $\mu$, then for 
 $\varepsilon >0$, there exists $\delta >0$ such that for all $  B \in  \mathbf{B}(X)$, from $\mu(B) <\delta$ it follows 
 $|\nu|(B) <\varepsilon$, cf. \cite[Lemma~4.2.1]{Cohn80}. \\
(iv) Apply (ii) and (iii).\\
(v) For $\varepsilon >0$ there exists $n\in \mathbb{N}$ such that $\sum_{k=n+1}^{\infty}\mu_{k}(X) < \frac{\varepsilon}{2}$. If 
$ B \in  \mathbf{B}(X)$, the regularity of $\mu_{k}$ yields the existence of a compact set $C_{k}$ and an open set $U_{k}$
satisfying $C_{k}\subseteq B\subseteq U_{k}$ and   
$\mu_{k}(U_{k}\setminus C_{k}) <\frac{\varepsilon}{2^{k+1}}$, $k\in  \mathbb{N}$.
The set $C:=\bigcup_{k=1}^{n}C_k$ is compact, the set $U:=\bigcap_{k=1}^{n}U_{k}$ is open, 
$C\subseteq B \subseteq U$, and $\mu(U\setminus C) < \varepsilon$.
\end{proof}
Let $X$ and $Y$  be Polish spaces,  i.~e. separable topological spaces that can be metrized by means of a complete 
metric. Note that any finite non-negative measure on $ \mathbf{B}(X)$ is regular, cf. \cite[Proposition~8.1.10]{Cohn80}. 
In what follows any regular finite non-negative Borel measure will be simply called a measure.

We need some facts on transition probabilities. A general theory of conditional probabilities and transition probabilities is 
given in \cite{Rao93}. Since we need only special cases here, we refer to the excellent introduction \cite{Part77}.
\begin{defi} (cf. \cite [35.4]{Part77}) A map $w: Y\times\mathbf{B}(X) \to [0,1)$ is called  a transition probability if 
\renewcommand{\labelenumi}{(\roman{enumi})}
\begin{enumerate}
\item for all $B\in \mathbf{B}(X)$, the function $y\to w(y,B)$, $y\in Y$, is $(\mathbf{B}(X), \mathbf{B}([0,1]))$-measurable,
\item for all $y\in Y$, the map $B\to w(y,B)$, $B\in \mathbf{B}(X)$, is a probability measure.
\end{enumerate}
\end{defi}
\begin{sat}\label{Th2.3}
(cf. \cite[35.9, 35.11, 35.14]{Part77} and \cite[Proposition~7.6.2]{Cohn80})~\\
Let $w$ be a transition probability on $Y \times \mathbf{B}(X)$ and $\tilde{\mu}$ be a measure on $\mathbf{B}(Y)$.
\renewcommand{\labelenumi}{(\roman{enumi})}
\begin{enumerate}
\item If  $f:X\times Y \to [0,\infty)$ is a $( \mathbf{B}(X \times Y), \mathbf{B}([0,\infty]))$-measurable function, 
then  for all $y\in Y$ the integral $\int_{X}f(x,y) w(y,dx)\le \infty$ exists and the function $y\to \int_{X}f(x,y) w(y,dx)$, $y\in Y$,
is  $( \mathbf{B}(Y), \mathbf{B}([0,\infty]))$-measurable.
\item Setting $\varrho(B):=\int_{Y}\int_{X}1_{B}(x,y) w(y,dx)\tilde{\mu}(dy), B\in \mathbf{B}(X \times Y),$ a measure $\varrho$  
is defined. A  $( \mathbf{B}(X \times Y),\mathbf{B}(\mathbb{C}))$-measurable function $g:X\times Y \to \mathbb{C}$ is integrable with 
respect to $\varrho$ if and only if $\int_{X}|g(x,y)| w(y,dx) < \infty$ for $\tilde{\mu}$-a.a. $y\in Y$ and 
$\int_{Y}\int_{X}|g(x,y)| w(y,dx)\tilde{\mu}(dy) <\infty$. In this case 
$$
\int_{X\times Y}g(x,y) \varrho (dx\otimes dy)= \int_{Y}\int_{X}g(x,y) w(y,dx)\tilde{\mu}(dy).
$$
\end{enumerate}
\end{sat}

Denote by $\delta_x$ the Dirac measure on $ \mathbf{B}(X)$ concentrated at $x\in X$. If $\psi:Y\to X$ is a  
$( \mathbf{B}(Y),\mathbf{B}(X))$-measurable function, it is  not hard to see that by  $w(y,B):=\delta_{\psi(y)}(B)$, $y\in Y$,
$B \in \mathbf{B}(X)$, a transition probability on $Y\times \mathbf{B}(X) $ is defined and from Theorem~\ref{Th2.3}~(ii) 
we can easily obtain the following result.

\begin{kor}\label{c2.4} 
If $\tilde{\mu}$ is a measure on $\mathbf{B}(Y)$, then  
$$
\mu(B):=\int_{Y}\int_{X}1_{B\times Y}(x,y) \delta_{\psi(y)}(dx)\tilde{\mu}(dy), B\in \mathbf{B}(X),
$$ 
defines a measure $\mu$ on $\mathbf{B}(X)$ satisfying
$$
\int_{X}f(x) \mu(dx)=\int_{Y}\int_{X}f(x) \delta_{\psi(y)}(dx)\tilde{\mu}(dy)
$$
for all $f\in L^{1}(\mu)$. Particularly, $\mu(B)=\tilde{\mu}\psi^{-1}(B)$, $B\in\mathbf{B}(X)$.
\end{kor}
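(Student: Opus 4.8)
The plan is to specialize Theorem~\ref{Th2.3}~(ii) to the transition probability $w(y,B):=\delta_{\psi(y)}(B)$ introduced just before the corollary, and then to identify the resulting measure $\varrho$ on $\mathbf{B}(X\times Y)$ with a measure on $\mathbf{B}(X)$ by pushing forward under the coordinate projection. First I would verify that $w$ is indeed a transition probability: for fixed $B$ the map $y\mapsto\delta_{\psi(y)}(B)=1_{\psi^{-1}(B)}(y)$ is $(\mathbf{B}(Y),\mathbf{B}([0,1]))$-measurable because $\psi$ is measurable, and for fixed $y$ the map $B\mapsto\delta_{\psi(y)}(B)$ is obviously a probability measure; this is the "not hard to see" sentence, so I would only recall it briefly.

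Next, applying Theorem~\ref{Th2.3}~(ii) with $\tilde\mu$ on $\mathbf{B}(Y)$, we get a well-defined measure $\varrho$ on $\mathbf{B}(X\times Y)$ via $\varrho(A)=\int_Y\int_X 1_A(x,y)\,\delta_{\psi(y)}(dx)\,\tilde\mu(dy)$. I would then set $\mu:=\varrho\pi^{-1}$, where $\pi:X\times Y\to X$ is the projection; since $\pi$ is continuous and $\varrho$ is a (regular) measure, Lemma~\ref{l2.1}~(ii) guarantees $\mu$ is again a regular finite non-negative Borel measure on $\mathbf{B}(X)$. The key bookkeeping identity is that for $B\in\mathbf{B}(X)$ one has $\pi^{-1}(B)=B\times Y$, so $\mu(B)=\varrho(B\times Y)=\int_Y\int_X 1_{B\times Y}(x,y)\,\delta_{\psi(y)}(dx)\,\tilde\mu(dy)$, which is exactly the displayed formula defining $\mu$ in the corollary.

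For the integration formula, I would argue by the standard extension scheme: the identity $\int_X f(x)\,\mu(dx)=\int_Y\int_X f(x)\,\delta_{\psi(y)}(dx)\,\tilde\mu(dy)$ holds for $f=1_B$ by the definition of $\mu$ just established, extends to non-negative simple functions by linearity, to all non-negative measurable $f$ by monotone convergence (using Theorem~\ref{Th2.3}~(i) to ensure the inner integral is a measurable function of $y$ so the outer integral makes sense), and finally to $f\in L^1(\mu)$ by splitting into real and imaginary, positive and negative parts; the integrability criterion in Theorem~\ref{Th2.3}~(ii) applied to $g(x,y):=f(x)$ shows the iterated integral converges. Finally, evaluating the inner integral explicitly, $\int_X 1_{B\times Y}(x,y)\,\delta_{\psi(y)}(dx)=1_B(\psi(y))=1_{\psi^{-1}(B)}(y)$, so $\mu(B)=\int_Y 1_{\psi^{-1}(B)}(y)\,\tilde\mu(dy)=\tilde\mu(\psi^{-1}(B))=\tilde\mu\psi^{-1}(B)$, giving the last assertion.

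The only mild obstacle is the regularity claim: Theorem~\ref{Th2.3}~(ii) as quoted already delivers that $\varrho$ is a \emph{measure} in the sense fixed in this section (i.e. regular finite non-negative Borel), so one must simply note that the pushforward under a continuous map preserves this by Lemma~\ref{l2.1}~(ii), and that finiteness is clear since $\mu(X)=\tilde\mu(Y)<\infty$; there is nothing genuinely hard here, the corollary being essentially a repackaging of Theorem~\ref{Th2.3}.
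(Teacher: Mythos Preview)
Your proposal is correct and follows exactly the route indicated by the paper, which simply says the corollary is an easy consequence of Theorem~\ref{Th2.3}~(ii) applied to the transition probability $w(y,\cdot)=\delta_{\psi(y)}(\cdot)$. You have merely made explicit the implicit steps (the pushforward under the projection, the standard extension from indicators to $L^{1}$, and the direct computation of the inner integral), so there is nothing to add or correct.
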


\begin{sat}\label{Th2.5} (cf. \cite[46.3]{Part77}) Let $X$ and $Y$ be Polish spaces, $\mu$ a measure on $ \mathbf{B}(X)$, and 
$\pi:X\to Y$ a $( \mathbf{B}(X),\mathbf{B}(Y))$-measurable map. There exists a transition probability $w$ on $Y\times  \mathbf{B}(X)$
with the following properties:
\renewcommand{\labelenumi}{(\roman{enumi})}
\begin{enumerate}
\item There exists a set $\tilde{B}_{0}\in  \mathbf{B}(Y) $ such that $\mu\pi^{-1}(\tilde{B}_{0})=0$ and moreover 
$w(y, \pi^{-1}(\{y\})=1$ for all $y\in Y\setminus \tilde{B}_{0}$.
\item For all $f \in L^{1}(\mu)$,
$$
\int_{Y}\int_{X}f(x) w(y,dx)\mu\pi^{-1}(dy)=\int_{X}f(x) \mu(dx).
$$
\end{enumerate}
\end{sat}
\begin{kor}  \label{c2.6}
Let $\alpha\in (0,\infty)$. Assume that the conditions of Theorem~\ref{Th2.3} are satisfied and let $w$ be  
a  transition probability on $Y\times \mathbf{B}(X) $ with properties (i) and (ii)  of Theorem~\ref{Th2.5}. 
If a family of functions $\{f_j: j\in I\}$ is dense  in $L^\alpha(\mu)$, there exists a set  $\tilde{B}\in  \mathbf{B}(Y) $
such that  $\tilde{B}_{0}\subseteq  \tilde{B}$,  $\mu\pi^{-1}(\tilde{B})=0$, and $\{f_j: j\in I\}$ is dense in $L^\alpha(w(y,\cdot)$
for all $y\in Y\setminus \tilde{B}$.
\end{kor}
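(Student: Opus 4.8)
The plan is to reduce the problem to a single countable family $\mathcal D$ of bounded Borel functions that is dense in $L^\alpha$ of \emph{every} finite non-negative Borel measure on $\mathbf B(X)$, and then, for each member of $\mathcal D$, to discard the negligible set of parameters $y$ for which that member cannot be approximated by the $f_j$ in the $w(y,\cdot)$-metric. Since $L^\alpha(\mu)$ is separable one may first pass to a countable subfamily of $\{f_j:j\in I\}$ still dense in $L^\alpha(\mu)$, so without loss of generality $I$ is countable; the sets $\{y:\int_X|f_j|^\alpha\,w(y,dx)=\infty\}$, $j\in I$, are then $\mu\pi^{-1}$-null by property~(ii) of Theorem~\ref{Th2.5} (applied to $|f_j|^\alpha\in L^1(\mu)$) and will be absorbed into $\tilde B$, which guarantees that the $f_j$ lie in $L^\alpha(w(y,\cdot))$ for the relevant $y$.

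To build $\mathcal D$: as $X$ is Polish it is second countable, so letting $\mathcal R$ be $X$ together with all finite intersections of sets from a fixed countable base of open sets, $\mathcal R$ is a $\pi$-system with $\sigma(\mathcal R)=\mathbf B(X)$; take $\mathcal D$ to be the countable set of all finite sums $\sum_i c_i\,1_{R_i}$ with $R_i\in\mathcal R$ and $c_i\in\mathbb Q+i\mathbb Q$. For any finite non-negative Borel measure $\nu$ on $\mathbf B(X)$ the family of $B\in\mathbf B(X)$ whose indicator lies in the $L^\alpha(\nu)$-closure of $\mathcal D$ is a $\lambda$-system (closure under complements is clear; closure under countable disjoint unions follows from dominated convergence) containing $\mathcal R$, hence it is all of $\mathbf B(X)$, and together with the density of simple functions in $L^\alpha(\nu)$ (truncate a general function and use $\nu(X)<\infty$) this shows $\mathcal D$ is dense in $L^\alpha(\nu)$. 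In particular $\mathcal D$ is dense in $L^\alpha(\mu)$ and in each $L^\alpha(w(y,\cdot))$, and every member of $\mathcal D$ belongs to $L^\alpha(w(y,\cdot))$ because $w(y,\cdot)$ is finite.

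Now fix $g\in\mathcal D$. By density of $\{f_j:j\in I\}$ in $L^\alpha(\mu)$ choose $j_n\in I$ with $\int_X|f_{j_n}-g|^\alpha\,d\mu\to 0$; note $|f_{j_n}-g|^\alpha\in L^1(\mu)$. By Theorem~\ref{Th2.3}~(i) the non-negative functions $\varphi_n(y):=\int_X|f_{j_n}(x)-g(x)|^\alpha\,w(y,dx)$ are $\mathbf B(Y)$-measurable, and property~(ii) of Theorem~\ref{Th2.5} gives
\[
\int_Y\varphi_n(y)\,\mu\pi^{-1}(dy)=\int_X|f_{j_n}-g|^\alpha\,d\mu\ \longrightarrow\ 0\qquad(n\to\infty).
\]
Thus $\varphi_n\to 0$ in $L^1(\mu\pi^{-1})$, so along a subsequence $\varphi_n(y)\to 0$ for all $y$ outside some $N_g\in\mathbf B(Y)$ with $\mu\pi^{-1}(N_g)=0$. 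For $y\notin N_g$ one has $\inf_{j\in I}\int_X|f_j-g|^\alpha\,w(y,dx)=0$, i.e. $g$ lies in the closure of $\{f_j:j\in I\}$ in $L^\alpha(w(y,\cdot))$.

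Finally, let $\tilde B$ be the union of $\tilde B_0$, of the null sets $\{y:\int_X|f_j|^\alpha\,w(y,dx)=\infty\}$ over $j\in I$, and of $\bigcup_{g\in\mathcal D}N_g$. This is a Borel set containing $\tilde B_0$, and $\mu\pi^{-1}(\tilde B)=0$ as a countable union of null sets. For every $y\in Y\setminus\tilde B$ the closure of $\{f_j:j\in I\}$ in $L^\alpha(w(y,\cdot))$ contains $\mathcal D$, which is dense there by the second paragraph, so it equals $L^\alpha(w(y,\cdot))$, as claimed. The point I expect to require the most care is the construction of $\mathcal D$: an arbitrary countable dense subset of $L^\alpha(\mu)$ would not suffice, since the natural restriction map $L^\alpha(\mu)\to L^\alpha(w(y,\cdot))$ need not be continuous, so one genuinely needs a family that is dense in $L^\alpha$ of all the measures $w(y,\cdot)$ simultaneously.
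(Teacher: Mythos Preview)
Your proof is correct and follows essentially the same strategy as the paper's: construct a countable family of simple/step functions dense in $L^\alpha$ of \emph{every} finite Borel measure on $X$, approximate each member of this family by the $f_j$ in $L^\alpha(\mu)$, disintegrate via Theorem~\ref{Th2.5}(ii) to obtain $L^1(\mu\pi^{-1})$-convergence of the fiberwise errors, pass to a.e.-convergent subsequences, and take the countable union of exceptional sets. The differences are cosmetic: the paper obtains its countable dense family by invoking that $\mathbf B(X)$ is countably generated and citing Cohn rather than running an explicit $\pi$--$\lambda$ argument, and it does not reduce $I$ to a countable set up front (nor does it need to, since only the countably many approximants $f_{j_{n,k}}$ ever enter, and these automatically land in $L^\alpha(w(y,\cdot))$ for the relevant $y$ as a by-product of the approximation).
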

\begin{proof}
Since  $X$ is a separable space, its Borel $\sigma$-algebra is countably generated. Therefore, there exists  a countable set $S:=
\{s_k: k\in \mathbb{N}\}$ of $(\mathbf{B}(X),\mathbf{B}(\mathbb{C}))$-measurable step functions, which  is dense  in $L^\alpha(w(y,\cdot))$
for all $y\in Y\setminus \tilde{B}_{0}$, cf. the proof of Proposition~3.4.5 of \cite{Cohn80}. Since  $\{f_j: j\in I\}$ is dense  
in $L^\alpha(\mu)$, for any $k \in\mathbb{N}$, there exists a sequence $(j_{n,k})_{n\in \mathbb{N}}$ of indices such that 
$$
\lim_{n\to \infty}\int_{X}|s_k(x)-f_{j_{n,k}}(x)|^{\alpha}\mu(dx)=0.
$$
Define a function $g_{n,k }:Y \to [0,\infty]$ by
$
g_{n,k }(y):=\int_{X}|s_k(x)-f_{j_{n,k}}(x)|^{\alpha}w(y,dx)
$, $y\in Y$. According to Theorem~\ref{Th2.5}~(ii), for all $k\in \mathbb{N}$, the sequence $(g_{n,k})_{n\in \mathbb{N}}$
tends to zero in $L^{1}(\mu\pi^{-1})$. Proceeding to a suitable subsequence if necessary, we can assume that for $k\in \mathbb{N}$,
there exists $\tilde{B}_{k}\in \mathbf{B}(Y)$ such that $\tilde{B}_{k}\subseteq Y\setminus \tilde{B}_{0}$, $\mu\pi^{-1}(\tilde{B}_{k})=0$.
and $\lim_{n\to \infty} g_{n,k}(y)=0$ for all $y\in \tilde{B}_{k}$. Obviously,  the set 
$\tilde{B}=\bigcup_{k=0}^{\infty}\tilde{B}_{k}$ has all properties claimed.
\end{proof}

\section{Conditions under which $\mathbf{P}(H)$ is dense if $\mu$ is concentrated on a transversal}
\arabeqn
\noindent
Let $G$ be an LCA group, $H$ its closed subgroup, $\varGamma$ the dual group of $G$, $\varLambda$ the annihilator of $H$, $\pi$ the canonical 
homeomorphism from $\varGamma$ onto $\varGamma/\varLambda$, and denote $\pi(\gamma)=:\tilde{\gamma}$, $\gamma\in  \varGamma$.
\begin{lem}\label{l3.1}
Let $\nu$ be a $\mathbb{C}$-valued measure on $ \mathbf{B}(\varGamma)$. Assume that $|\nu|\pi^{-1}$ is discrete. If $|\nu|$ is concentrated on a transversal and 
$\int_{\varGamma}\langle\gamma,y\rangle\nu(d\gamma)=0$ for all $y\in H$, then $\nu$  is the zero measure.
\end{lem}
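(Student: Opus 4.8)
The plan is to use the two hypotheses on $|\nu|$ to force $\nu$ to be a \emph{discrete} measure whose atoms lie in pairwise distinct cosets of $\varLambda$, and then to read the relation $\int_{\varGamma}\langle\gamma,y\rangle\,\nu(d\gamma)=0$, $y\in H$, as the vanishing of the Fourier--Stieltjes transform of a bounded measure on the dual group $\varGamma/\varLambda\cong\widehat H$. Uniqueness of the Fourier--Stieltjes transform then yields $\nu=0$.

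For the first step I would argue as follows. Discreteness of $|\nu|\pi^{-1}$ means $|\nu|\pi^{-1}\big((\varGamma/\varLambda)\setminus S\big)=0$ for some countable set $S\subseteq\varGamma/\varLambda$, so that $|\nu|$ is concentrated on $\pi^{-1}(S)=\bigcup_{\tilde\gamma\in S}\pi^{-1}(\{\tilde\gamma\})$, a countable union of cosets of $\varLambda$. Let $D$ be a transversal carrying $|\nu|$. If $\gamma,\gamma'\in D$ with $\lambda:=\gamma-\gamma'\in\varLambda\setminus\{0\}$, then $\gamma\in D\cap(\lambda+D)=\emptyset$, a contradiction; hence $D$ meets every coset of $\varLambda$ in at most one point, and therefore $D\cap\pi^{-1}(S)$ is a countable set, say $\{\gamma_n:n\in\mathbb N\}$, on which $|\nu|$, and hence $\nu$, is concentrated. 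Since $\nu$ is of bounded variation, this forces $\nu=\sum_{n}c_n\delta_{\gamma_n}$ with $c_n:=\nu(\{\gamma_n\})$ and $\sum_{n}|c_n|\le|\nu|(\varGamma)<\infty$; and because the $\gamma_n$ lie in $D$, the cosets $\tilde\gamma_n:=\pi(\gamma_n)$ are pairwise distinct, so $\nu\pi^{-1}=\sum_{n}c_n\delta_{\tilde\gamma_n}$ again has pairwise distinct atoms.

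For the second step, recall that for $y\in H$ the number $\langle\gamma,y\rangle$ depends only on $\pi(\gamma)$, and that under the isomorphism $\varGamma/\varLambda\cong\widehat H$ the map $\tilde\gamma\mapsto(y\mapsto\langle\gamma,y\rangle)$ is precisely the duality between $\widehat H$ and $H$. Hence the hypothesis rewrites as $\sum_{n}c_n\langle\tilde\gamma_n,y\rangle=0$ for all $y\in H$, i.e. the Fourier--Stieltjes transform of the bounded discrete (hence regular) measure $\nu\pi^{-1}$ on $\mathbf B(\widehat H)$ vanishes identically on $H$. By the uniqueness theorem for Fourier--Stieltjes transforms on LCA groups, $\nu\pi^{-1}=0$; since its atoms $\tilde\gamma_n$ are pairwise distinct, $c_n=0$ for every $n$, and thus $\nu=0$.

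The step I expect to be the crux is the reduction to distinct cosets: it is exactly the transversal hypothesis that keeps the push-forward under $\pi$ from merging atoms, which is what makes the identification $\varGamma/\varLambda\cong\widehat H$ and the uniqueness theorem applicable. If one prefers to avoid citing Fourier--Stieltjes uniqueness, the concluding argument can be carried out by hand in the Bohr compactification $\overline H$ of $H$: since $\sum_{n}|c_n|<\infty$, the distinct characters $\tilde\gamma_n$ extend to continuous characters of the compact group $\overline H$, the series $\sum_{n}c_n\tilde\gamma_n$ converges uniformly to a continuous function on $\overline H$ vanishing on the dense subgroup $H$, hence vanishing everywhere, and integrating it against $\overline{\tilde\gamma_m}$ with respect to normalized Haar measure on $\overline H$ isolates $c_m=0$.
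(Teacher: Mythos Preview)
Your proof is correct and follows essentially the same route as the paper: push $\nu$ forward to $\varGamma/\varLambda\cong\widehat H$, invoke uniqueness of the Fourier--Stieltjes transform to get $\nu\pi^{-1}=0$, and use discreteness together with the transversal hypothesis to pull this back to $\nu=0$. The only cosmetic difference is in the last step: the paper recovers $|\nu|(\varGamma)=0$ by integrating an explicitly constructed measurable function $\tilde f$ (built from the Radon--Nikodym derivative $h=d\nu/d|\nu|$) against $\nu\pi^{-1}$, a device that foreshadows the measurability issue treated in Lemma~\ref{l3.7}, whereas you simply read off $c_n=\nu\pi^{-1}(\{\tilde\gamma_n\})=0$ from the pairwise distinctness of the atoms.
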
 
\begin{proof}
The elements of the dual group of   $\varGamma/\varLambda$ are precisely all functions $\chi_{y}:\varGamma/\varLambda \to\mathbb{C}$,
$y\in H$, of the form $ \chi_{y}(\tilde{\gamma})=\langle\gamma,y\rangle$, $\tilde{\gamma}\in \varGamma/\varLambda$, where $\gamma$
can be chosen arbitrarily from $\pi^{-1}(\{\tilde{\gamma}\})$. By the integral transformation formula 
$$
\int_{\varGamma/\varLambda}\chi_{y}(\tilde{\gamma})\nu\pi^{-1}(d\tilde{\gamma})=\int_{\varGamma}\langle\gamma,y\rangle\nu(d\gamma),
$$
for all $y\in H $, and from the uniqueness theorem of the Fourier transform, cf. \cite[(31.5)]{Hewitt70}, it follows that $\nu\pi^{-1}$
 is the zero measure. It is not hard to see that if $| \nu|\pi^{-1}$ is discrete and $|\nu|$ is concentrated on a transversal, then there exists a finite  or countably infinite 
 subset $B:=\{\gamma_{k}: k\in K \}$ of $\varGamma$, which meets each $\varLambda$-coset at most  once and such that 
 $|\nu|(\varGamma\setminus B)=0$ and $|\nu|(\{\gamma_{k}\}) >0$, $k \in K$. Let $h$ be the Radon-Nikodym derivative of $\nu$ with respect 
 to $|\nu|$. We can assume  that $h=0$ on $ \varGamma\setminus B$ and define  a function $\tilde{f}:\varGamma/\varLambda\to \mathbb{C}$
 by setting
 $$
\tilde{f}(\tilde{\gamma})=\left\{
\begin{array}{  c c c}
h(\gamma_{k})^{-1},&\mbox{if}\:\gamma_{k}\in \pi^{-1}(\{\tilde{\gamma}\}),&k\in K,\\
0,&&\mbox{else}
\end{array}
\right..
 $$
Obviously,  $\tilde{f}$ is a $(\mathbf{B}(\varGamma/\varLambda), \mathbf{B}(\mathbb{C}))$-measurable function  and  
\begin{eqnarray*}
|\nu|(\varGamma)&=&|\nu|(B)=\int_{B}h(\gamma)^{-1} \nu(d\gamma)
\\&=&\int_{B}\tilde{f}(\tilde{\gamma})\nu(d\gamma)
=\int_{\varGamma}\tilde{f}(\pi{\gamma})\nu(d\gamma)
\\&=&\int_{\varGamma/\varLambda}\tilde{f}(\tilde{\gamma})\nu\pi^{-1}(d\tilde{\gamma})=0
\end{eqnarray*}
by the integral transformation formula.
\end{proof}
\begin{sat}\label{Th3.2}
Let $\mu$ be a measure on  $ \mathbf{B}(\varGamma)$ and $\alpha \in (0,\infty)$. Assume that $\mu\pi^{-1}$ is discrete. If  $\mu$  is concentrated on a transversal,
then $\mathbf{P}(H)$ is dense in $L^\alpha(\mu)$.
\end{sat}
\begin{proof}
First let   $\alpha \in [1,\infty)$ and $ \beta:= \frac{\alpha}{\alpha-1}\in (1,\infty]$. If  $f \in L^{\beta}(\mu)$ is such that
$$
\int_{\varGamma}\langle\gamma,y \rangle f(\gamma)\mu(d\gamma)=0,
$$
for all $y \in H$, then by Lemma~\ref{l3.1} the $\mathbb{C}$-valued measure $fd\mu$ is the zero measure, which implies that $\mathbf{P}(H)$ 
is dense in $L^\alpha(\mu)$. Since $\mu$ is finite, it follows easily that for $\alpha \in (0,1)$,  the space $\mathbf{P}(H)$ is dense
in $L^\alpha(\mu)$ as well.
\end{proof}
The preceding proof shows that if the assumption of discreteness of $\nu\pi^{-1}$ could be dropped, one could establish Theorem~\ref{Th3.2} 
for all measures $\mu$ on $ \mathbf{B}(\varGamma)$. In fact, a similar result to that of Lemma~\ref{l3.1} was formulated by 
Lee \cite[Lemma~2]{Lee78} without assuming that $|\nu|\pi^{-1}$ is discrete. However, his proof seems to contain 
a gap since he did not prove the Borel measurability of a function which  corresponds to the function $\tilde{f}$ of our proof of 
Lemma~\ref{l3.1}. The remaining part of the present section is devoted to the proof of the assertion that if $\varLambda$ is 
metrizable, then  $\mathbf{P}(H)$ is dense in  $L^\alpha(\mu)$ for all measures $\mu$, which are concentrated on a transversal, and all $\alpha \in (0,\infty)$. To obtain 
such  a result we apply a theorem of Feldman and Greenleaf on the existence of a Borel transversal and a Borel measurable cross-section, cf. \cite {Feld68}. 

A subset $T$ of $\varGamma$ is called a transversal (with respect to $\varLambda$) if it meets each $\varLambda$-coset just once.
A map $\tau: \varGamma/\varLambda \to T$ is called a cross-section if $\pi\circ \tau$ is the identical map on 
$ \varGamma/\varLambda$. We mention that Kluv\'{a}nek's sampling theorem and related results, cf. \cite{Bea07,Kluva65}, clarify the relationship between transversals and sampling theorems.
\begin{lem}\label{l3.3}
 A subset $T$ of $\varGamma$ is  a transversal if and only if 
 $$
 \pi ^{-1}(\pi(T))=\bigcup_{\lambda\in \varLambda}(\lambda+T)=\varGamma
 $$
 and $T\cap (\lambda+T)=\emptyset$ for all $\lambda\in \varLambda\setminus \{0\}$.
\end{lem}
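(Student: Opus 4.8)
The plan is to unwind the definition of a transversal — that $T$ meets each $\varLambda$-coset exactly once — into the conjunction of two conditions, "at least once" and "at most once", and to note that the first displayed equality in the statement is an unconditional set identity. Concretely, I would begin by observing that for \emph{every} subset $T\subseteq\varGamma$ one has $\pi^{-1}(\pi(T))=\bigcup_{\lambda\in\varLambda}(\lambda+T)$: indeed $\gamma\in\pi^{-1}(\pi(T))$ means $\tilde\gamma=\tilde t$ for some $t\in T$, i.e. $\gamma\in t+\varLambda$ for some $t\in T$, i.e. $\gamma\in\bigcup_{t\in T}(t+\varLambda)=\bigcup_{\lambda\in\varLambda}(\lambda+T)$. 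Thus that equality carries no hypothesis, and the real content of the lemma is the equivalence of "$T$ is a transversal" with the pair of conditions $\pi^{-1}(\pi(T))=\varGamma$ and $T\cap(\lambda+T)=\emptyset$ for all $\lambda\in\varLambda\setminus\{0\}$.

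For the covering half I would use that the $\varLambda$-cosets are precisely the fibres $\pi^{-1}(\{\tilde\gamma\})$, so every coset meets $T$ if and only if $\bigcup_{t\in T}(t+\varLambda)=\varGamma$, which by the identity above is exactly $\pi^{-1}(\pi(T))=\varGamma$. For the "at most once" half I would show directly that the following are equivalent: (a) no $\varLambda$-coset contains two distinct points of $T$; (b) whenever $t_1,t_2\in T$ and $t_1-t_2\in\varLambda$, then $t_1=t_2$; (c) $T\cap(\lambda+T)=\emptyset$ for every $\lambda\in\varLambda\setminus\{0\}$. Here (a)$\Leftrightarrow$(b) is immediate since $t_1,t_2$ lie in a common coset iff $t_1-t_2\in\varLambda$; for (b)$\Rightarrow$(c), any $\gamma\in T\cap(\lambda+T)$ with $\lambda\in\varLambda$ gives $\gamma=t_1=\lambda+t_2$ with $t_1,t_2\in T$ and $t_1-t_2=\lambda$, hence $\lambda=0$; and for (c)$\Rightarrow$(b), if $t_1,t_2\in T$ with $0\neq\lambda:=t_1-t_2\in\varLambda$ then $t_1\in T\cap(\lambda+T)$, contradicting (c).

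Combining the two halves: $T$ meets each coset exactly once iff it meets each coset at least once and at most once iff $\pi^{-1}(\pi(T))=\varGamma$ and $T\cap(\lambda+T)=\emptyset$ for all $\lambda\in\varLambda\setminus\{0\}$, which together with the unconditional identity is the assertion. Since only the coset structure of $\varLambda$ in $\varGamma$ and the definition of $\pi$ enter — no topology, no measure theory — I do not anticipate any genuine obstacle; the one point worth being careful about is to present the displayed chain of equalities correctly, namely as an identity that holds for all $T$ plus the extra hypothesis $\pi^{-1}(\pi(T))=\varGamma$, rather than as three independent equalities.
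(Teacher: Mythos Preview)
Your proposal is correct and follows essentially the same elementary route as the paper: both arguments reduce the definition of a transversal to the conjunction of a covering condition (equivalent to $\pi^{-1}(\pi(T))=\varGamma$) and a disjointness condition (equivalent to $T\cap(\lambda+T)=\emptyset$ for $\lambda\neq 0$). Your explicit remark that $\pi^{-1}(\pi(T))=\bigcup_{\lambda\in\varLambda}(\lambda+T)$ is an unconditional identity, and your clean ``at least once / at most once'' decomposition, are slightly more transparent than the paper's forward-implication-then-contrapositive organization, but the content is the same.
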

\begin{proof}
Let $T$ be a transversal. Then for $\gamma\in \varGamma$, there exists $t \in T$ such that $t\in\gamma +\varLambda$, hence, 
$\gamma \in \lambda+T$ for some $\lambda \in \varLambda$, which yields $\varGamma=\pi ^{-1}(\pi(T)) $. If $\gamma \in T\cap (\lambda+T)$
for some $\gamma\in \varGamma$ and  $\lambda \in \varLambda$, then $\gamma=\lambda+t$  for some $t \in T$, hence, $\gamma \in t+\varLambda$.
Since $t\in t+\varLambda$, it follows $\gamma=t$ and $\lambda=0$. Let $T$ be a subset of $\varGamma$, which is not a transversal.  
Then there exists $\gamma\in \varGamma$ such that $(\gamma+\varLambda)\cap T=\emptyset$ or there exist $t_1, t_2 \in T$, $t_1\neq t_2$, with 
$t_1, t_2 \in \gamma+\varLambda$. In the first case 
$$
(\gamma+\varLambda)\cap(\lambda+T)=(\gamma+\lambda +\varLambda)\cap(\lambda+T)=\lambda+[(\gamma+\varLambda)\cap T]= \emptyset
$$ 
for all $\lambda \in \varLambda$, which yields $\pi ^{-1}(\pi(T))\neq \varGamma$.
In the second case there exist $\lambda_{k}\in \varLambda$ such that $t_{k}=\gamma+\lambda_{k}$, $k\in \{1,2\}$, hence, 
$T \cap (\lambda_{1}-\lambda_{2}+T)\neq \emptyset$, where $\lambda_{1}-\lambda_{2}\neq 0$ since $t_{1}\neq t_{2}$.
\end{proof}
\begin{lem}\label{l3.4}
If $R$ is a transversal and $S$ is a subset of $\varGamma$ such that $\quad$ $S\cap (\lambda+S)=\emptyset$ for all  
$\lambda\in \varLambda\setminus \{0\}$, then the set $T:=S\cup [R\cap(\varGamma\setminus\pi ^{-1}(\pi(S)) )] $
is a transversal.
\end{lem}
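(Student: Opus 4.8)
The plan is to verify the two conditions of Lemma~\ref{l3.3} for the set $T$. Throughout I would write $U:=\varGamma\setminus\pi^{-1}(\pi(S))$, so that $T=S\cup(R\cap U)$. The structural fact I would isolate first is that $U$ is \emph{saturated}, i.e.\ a union of $\varLambda$-cosets: indeed $\pi^{-1}(\pi(S))=\bigcup_{\lambda\in\varLambda}(\lambda+S)$ is saturated, hence so is its complement $U$. From this one gets $\lambda+U=U$ for every $\lambda\in\varLambda$, as well as $S\cap U=\emptyset$ and $(\lambda+S)\cap U=\emptyset$ for all $\lambda\in\varLambda$, since $\lambda+S\subseteq\pi^{-1}(\pi(S))$.

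For the first condition, $\pi^{-1}(\pi(T))=\varGamma$, I would use that $R$ is a transversal, so $\pi$ restricts to a bijection of $R$ onto $\varGamma/\varLambda$. Because $U$ is saturated, a coset is contained in $U$ exactly when it meets $U$, exactly when its unique representative in $R$ lies in $U$; hence $\pi(R\cap U)=\pi(U)$ and therefore $\pi^{-1}(\pi(R\cap U))=U$. Consequently
\[
\pi^{-1}(\pi(T))=\pi^{-1}(\pi(S))\cup\pi^{-1}(\pi(R\cap U))=\pi^{-1}(\pi(S))\cup U=\varGamma .
\]

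For the second condition, fix $\lambda\in\varLambda\setminus\{0\}$. Using $\lambda+U=U$ and distributing the intersection over the unions, I would write $T\cap(\lambda+T)$ as the union of the four sets
\[
S\cap(\lambda+S),\qquad S\cap(\lambda+R)\cap U,\qquad (R\cap U)\cap(\lambda+S),\qquad R\cap(\lambda+R)\cap U,
\]
and then dispose of each in turn: the first is empty by the hypothesis on $S$; the second is contained in $S\cap U=\emptyset$; the third is contained in $(\lambda+S)\cap U=\emptyset$; and the fourth is contained in $R\cap(\lambda+R)$, which is empty by Lemma~\ref{l3.3} applied to the transversal $R$ since $\lambda\neq 0$. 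Hence $T\cap(\lambda+T)=\emptyset$, and Lemma~\ref{l3.3} now yields that $T$ is a transversal.

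There is no serious obstacle here; the argument is essentially bookkeeping. The one point that needs care — and where a careless argument could fail — is the identity $\pi^{-1}(\pi(R\cap U))=U$, which is false for an arbitrary subset $U$ and relies precisely on $U$ being saturated together with $R$ meeting every $\varLambda$-coset exactly once. I would therefore state the saturation of $U$ explicitly before invoking it, and otherwise keep the intersection-expansion in Step~2 fully written out to be sure no term is overlooked.
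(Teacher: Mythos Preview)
Your proof is correct and follows essentially the same approach as the paper: both verify the two conditions of Lemma~\ref{l3.3}, using that $U=\varGamma\setminus\pi^{-1}(\pi(S))$ is $\varLambda$-saturated to separate the $S$-part from the $R\cap U$-part. The paper argues element-wise (take $\gamma\in\varGamma$ and find $\lambda$ with $\gamma\in\lambda+T$; take $t_1,t_2\in T$ with $\lambda+t_1=t_2$ and observe they must lie in the same piece), whereas you phrase the same steps via set identities and an explicit four-term expansion, but the substance is identical.
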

\begin{proof}
If $\gamma \in \lambda+S$ for some $ \lambda\in \varLambda$, then, of course, $\gamma\in \lambda+T$. If $\gamma \notin \lambda+S$ 
for all $\lambda\in \varLambda$, then
$$
\gamma \in \varGamma \setminus\pi ^{-1}(\pi(S))=\lambda+[\varGamma \setminus\pi ^{-1}(\pi(S))]
$$
for all  $\lambda \in \varLambda$. Since $R$ is a transversal, $\gamma\in \lambda+R$ for some $ \lambda\in \varLambda$, hence 
$\gamma\in \lambda+T$, which implies $\varGamma=\pi ^{-1}(\pi(T))$. If we have $\lambda+t_1=t_2$ for some $ \lambda\in \varLambda$,
$t_{1}, t_{2}\in T$, then either $t_{1}, t_{2}\in S$ or $t_{1}, t_{2}\in [R\cap(\varGamma\setminus\pi ^{-1}(\pi(S)) )]$. In both cases $\lambda=0$ by properties of $S$ or the transversal $R$, respectively. 
Thus, $T$ is a transversal  by Lemma~\ref{l3.3}.
\end{proof}
\begin{sat}(cf. \cite[Theorem~1 and Remark~3~(ii)]{Feld68} )\label{Th3.5} 
Let $\varLambda$ be a metrizable subgroup of $\varGamma$. There exists a transversal, which belongs to $ \mathbf{B}(\varGamma)$. If $T$ 
is such a transversal, then the corresponding cross-section $\tau$ can be chosen 
$( \mathbf{B}(\varGamma/\varLambda), \mathbf{B}(T))$-measurable.
\end{sat}
\begin{lem}\label{l3.6}
Let $\mu$ be a measure on $ \mathbf{B}(\varGamma)$ and $B\in\mathbf{B}(\varGamma)$. There exists a set $A\in \mathbf{B}(\varGamma) $
such that $A\subseteq B$, $\mu(B \setminus A )=0$, and $\pi ^{-1}(\pi(A))\in \mathbf{B}(\varGamma)$.
\end{lem}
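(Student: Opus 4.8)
\emph{Plan.} The plan is to use the regularity of $\mu$ to approximate $B$ from inside by compact sets and then to exploit the fact that the continuous surjection $\pi$ maps compact sets to closed sets, whose full preimages are again closed; a countable union of such preimages is $F_\sigma$, hence Borel.

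\emph{Construction of $A$.} First I would invoke the regularity of $\mu$: for every $n\in\mathbb{N}$ there exist a compact set $C_n$ and an open set $U_n$ with $C_n\subseteq B\subseteq U_n$ and $\mu(U_n\setminus C_n)<\frac1n$, in particular $\mu(B\setminus C_n)<\frac1n$. Put $A:=\bigcup_{n=1}^\infty C_n$. Then $A$ is an $F_\sigma$-set, so $A\in\mathbf{B}(\varGamma)$, we have $A\subseteq B$, and $\mu(B\setminus A)\le\mu(B\setminus C_n)<\frac1n$ for every $n$, whence $\mu(B\setminus A)=0$.

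\emph{Borel measurability of the saturation.} Next I would compute $\pi^{-1}(\pi(A))$. Since $\pi^{-1}(\pi(\cdot))$ commutes with arbitrary unions, $\pi^{-1}(\pi(A))=\bigcup_{n=1}^\infty\pi^{-1}(\pi(C_n))$. For each $n$, $\pi(C_n)$ is a compact subset of $\varGamma/\varLambda$, because $\pi$ is continuous; as $\varLambda$ is a closed subgroup, $\varGamma/\varLambda$ is Hausdorff, so $\pi(C_n)$ is closed, and therefore $\pi^{-1}(\pi(C_n))$ is a closed, in particular Borel, subset of $\varGamma$. Hence $\pi^{-1}(\pi(A))$ is a countable union of closed sets, so $\pi^{-1}(\pi(A))\in\mathbf{B}(\varGamma)$, and the lemma follows.

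\emph{Main obstacle.} The only subtle point is that the set whose Borel measurability is required, namely the saturation $\pi^{-1}(\pi(A))$, need not be Borel for a general Borel set $A$; the whole argument hinges on choosing $A$ to be a countable union of compacta, for which the saturations are manifestly closed. The remaining ingredients — the identity $\pi^{-1}(\pi(\bigcup_n C_n))=\bigcup_n\pi^{-1}(\pi(C_n))$ and the Hausdorffness of $\varGamma/\varLambda$ for closed $\varLambda$ — are routine.
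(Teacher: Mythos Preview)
Your proof is correct and follows essentially the same approach as the paper: approximate $B$ from within by a sequence of compact sets using regularity, take $A$ to be their union, and use continuity of $\pi$ to see that each $\pi(C_n)$ is compact so that $\pi^{-1}(\pi(A))$ is an $F_\sigma$-set. You have simply spelled out in more detail the step the paper compresses into one line, in particular the Hausdorffness of $\varGamma/\varLambda$ and the identity $\pi^{-1}(\pi(\bigcup_n C_n))=\bigcup_n\pi^{-1}(\pi(C_n))$.
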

\begin{proof}
By regularity of $\mu$ there exists a sequence $(C_{k})_{k\in \mathbb{N}}$ of compact subsets of $B$ such that 
$\lim_{k\to \infty}\mu(C_{k})=\mu(B)$. The set $A:= \bigcup_{k=1}^{\infty}C_{k}$ belongs to $ \mathbf{B}(\varGamma)$,
$A\subseteq B$, and  $\mu(B \setminus A )=0$. Since $\pi$ is continuous, $\pi(C_{k})$ is compact, hence, 
$\pi ^{-1}(\pi(A))\in \mathbf{B}(\varGamma)$.
\end{proof}

\begin{lem}\label{l3.7}
Let $\nu$ be a $\mathbb{C}$-valued measure on $\mathbf{B}(\varGamma)$. 
Assume that $\varLambda$ is metrizable. 
If $|\nu|$ is concentrated on a transversal and $\int_{\varGamma}\langle\gamma,y\rangle\nu(d\gamma)=0$
for all $y \in H$, then $\nu$ is the zero measure.
\end{lem}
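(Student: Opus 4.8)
The plan is to reduce the claim to the already–proved Lemma~\ref{l3.1} by replacing the transversal on which $|\nu|$ is concentrated by a genuine \emph{Borel} transversal $T$ that still carries $|\nu|$ and admits a Borel cross-section, and then to transport the vanishing of the image measure $\nu\pi^{-1}$ back to $\nu$ along the Borel bijection $\pi|_T$.

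To build $T$, I would start from a set $D\in\mathbf{B}(\varGamma)$ with $|\nu|(\varGamma\setminus D)=0$ and $D\cap(\lambda+D)=\emptyset$ for all $\lambda\in\varLambda\setminus\{0\}$, furnished by the hypothesis. Lemma~\ref{l3.6}, applied to $|\nu|$ and $D$, yields $A\in\mathbf{B}(\varGamma)$ with $A\subseteq D$, $|\nu|(D\setminus A)=0$ and $\pi^{-1}(\pi(A))\in\mathbf{B}(\varGamma)$; note that $A$ meets each $\varLambda$-coset at most once. Since $\varLambda$ is metrizable, Theorem~\ref{Th3.5} gives a Borel transversal $R$, and Lemma~\ref{l3.4} with $S:=A$ shows that
$$
T:=A\cup\bigl[R\cap(\varGamma\setminus\pi^{-1}(\pi(A)))\bigr]
$$
is a transversal; it is Borel because $A$, $R$ and $\pi^{-1}(\pi(A))$ are. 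Moreover $\varGamma\setminus T\subseteq\varGamma\setminus A$ and $|\nu|(\varGamma\setminus A)\le|\nu|(\varGamma\setminus D)+|\nu|(D\setminus A)=0$, so $|\nu|$ is concentrated on $T$. Finally, Theorem~\ref{Th3.5} applied to the Borel transversal $T$ supplies a $(\mathbf{B}(\varGamma/\varLambda),\mathbf{B}(T))$-measurable cross-section $\tau:\varGamma/\varLambda\to T$; as $T$ is a transversal, $\pi\circ\tau=\mathrm{id}$ and $\tau\circ\pi|_T=\mathrm{id}_T$, so $\pi|_T$ is a bijection of $T$ onto $\varGamma/\varLambda$ whose inverse $\tau$ is Borel, whence $\pi|_T$ carries Borel subsets of $T$ onto Borel subsets of $\varGamma/\varLambda$.

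With $T$ and $\tau$ in place, the remaining argument copies that of Lemma~\ref{l3.1}. The integral transformation formula and the identification there of the dual group of $\varGamma/\varLambda$ with $\{\chi_y:y\in H\}$ give $\int_{\varGamma/\varLambda}\chi_y(\tilde\gamma)\,\nu\pi^{-1}(d\tilde\gamma)=\int_{\varGamma}\langle\gamma,y\rangle\,\nu(d\gamma)=0$ for every $y\in H$, so by the uniqueness theorem for the Fourier transform, cf.\ \cite[(31.5)]{Hewitt70}, the $\mathbb{C}$-valued measure $\nu\pi^{-1}$ (regular by Lemma~\ref{l2.1}~(iv)) is the zero measure. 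To conclude $\nu=0$, take $B\in\mathbf{B}(\varGamma)$ and put $B_T:=B\cap T$; since $B_T\subseteq T$ one has $\tau^{-1}(B_T)=\pi(B_T)\in\mathbf{B}(\varGamma/\varLambda)$ and therefore $\pi^{-1}(\pi(B_T))=\pi^{-1}(\tau^{-1}(B_T))\in\mathbf{B}(\varGamma)$, while $\pi^{-1}(\pi(B_T))\cap T=B_T$ because $T$ is a transversal. Using $|\nu|(\varGamma\setminus T)=0$ twice, $\nu(B)=\nu(B_T)=\nu(\pi^{-1}(\pi(B_T)))=\nu\pi^{-1}(\pi(B_T))=0$, so $\nu$ is the zero measure.

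I expect the construction of $T$ and $\tau$ to be the only real obstacle: the hypothesis alone hands over a Borel set $D$ meeting each coset at most once, with no control over $\pi(D)$ and no measurable selection attached, and it is precisely the combination of Lemma~\ref{l3.6} (making $\pi^{-1}(\pi(A))$ Borel), Lemma~\ref{l3.4} (completing $A$ to a full transversal) and the Feldman--Greenleaf Theorem~\ref{Th3.5} (delivering the Borel cross-section) that removes this difficulty; it is also the sole point where metrizability of $\varLambda$ enters. Everything past $\nu\pi^{-1}=0$ is the same bookkeeping as in Lemma~\ref{l3.1}, now legitimate because $\tau$ is Borel.
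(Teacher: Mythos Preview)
Your proof is correct and follows essentially the same route as the paper: build a Borel transversal $T$ carrying $|\nu|$ together with a Borel cross-section $\tau$ via Lemmas~\ref{l3.4}, \ref{l3.6} and Theorem~\ref{Th3.5}, deduce $\nu\pi^{-1}=0$ from Fourier uniqueness, and transport this back to $\nu$ through $\tau$. The only cosmetic differences are that the paper finishes by introducing the Radon--Nikodym derivative $h$ of $\nu$ with respect to $|\nu|$ and computing $|\nu|(\varGamma)=\int_{\varGamma/\varLambda}(h^{+}\circ\tau)\,d(\nu\pi^{-1})=0$, whereas you argue set-theoretically that $\nu(B)=\nu\pi^{-1}(\pi(B\cap T))=0$, and that you apply Lemma~\ref{l3.6} to $D$ \emph{before} forming $T$, making the Borel status of $T$ explicit where the paper writes only ``by Lemma~\ref{l3.6} we can assume that $T\in\mathbf{B}(\varGamma)$.''
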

\begin{proof}
From the proof of Lemma~\ref{l3.1} we know that $\nu\pi^{-1}$ is a zero measure. Let $R$ be a transversal, which belongs to
$\mathbf{B}(\varGamma)$, cf. Theorem~\ref{Th3.5}. Since $|\nu|$ is concentrated on a transversal, there exists $D\in \mathbf{B}(\varGamma)$ such that
$\mu(\varGamma \setminus D )=0$ and $D\cap(\lambda+D)=\emptyset$ for all $\lambda\in \varLambda\setminus\{0\}$. 
By Lemma~\ref{l3.4} the set $T:=D\cup [R\cap(\varGamma\setminus\pi ^{-1}(\pi(D)) ] $ is a transversal and by Lemma~\ref{l3.6} 
we can assume that $T\in  \mathbf{B}(\varGamma)$.
Let $\tau$ be a corresponding $( \mathbf{B}(\varGamma/\varLambda), \mathbf{B}(T))$-measurable cross-section. Let $h$ be the 
Radon-Nikodym derivative of $\nu$ with respect to  $|\nu|$. We can assume that $h$ is a
$( \mathbf{B}(\varGamma), \mathbf{B}(\mathbb{C}))$-measurable function, $|h|=1$ on $D$ and $h=0$ on $\varGamma\setminus D$. Therefore, 
the function $h^{+}$ defined by
$$
h^{+}(\gamma)=\left\{
\begin{array}{cc}
h(\gamma)^{-1},&\gamma\in D,\\
0,&\gamma\in\varGamma\setminus D,
\end{array}
\right.
$$ 
is $( \mathbf{B}(\varGamma), \mathbf{B}(\mathbb{C}))$-measurable, $\tilde{f}:=h^{+}\circ\tau$ is 
$( \mathbf{B}(\varGamma/\varLambda), \mathbf{B}(\mathbb{C}))$-measurable and we can complete the proof similarly 
to the proof of Lemma~\ref{l3.1}.
\end{proof}
As a consequence of the preceding lemma we obtain the following assertion, cf. the proof of Theorem~\ref{Th3.2}.
\begin{sat}\label{Th3.8}
Let $\mu$ be a measure on $ \mathbf{B}(\varGamma)$ and $\alpha \in (0,\infty)$.
Assume that the annihilator group $\varLambda$ is metrizable. If $\mu$ is concentrated on a transversal, then $\mathbf{P}(H)$ is dense in $L^\alpha(\mu)$.
\end{sat}
\section{Conditions under which $\mu$ is concentrated on a transversal if $\mathbf{P}(H)$ is dense}
To motivate our approach we again start with the case that $\mu\pi^{-1}$ is discrete.
\begin{sat}\label{Th4.1} 
Let $\mu$ be a measure on $ \mathbf{B}(\varGamma)$ and $\alpha \in (0,\infty)$. Assume that  $\mu\pi^{-1}$ is a discrete measure. 
Then   $\mathbf{P}(H)$ is dense in $L^\alpha(\mu)$ if and only if  $\mu$ is concentrated on a transversal. 
\end{sat}
\begin{proof}
 The ``if-part'' was stated in Theorem~\ref{Th3.2}. To prove the converse let 
 $\tilde{S}:=\{\tilde{\gamma}_{k}: k\in K\}$  be a finite or countably infinite subset of $\varGamma/\varLambda$ such that
 $\mu\pi^{-1}((\varGamma/\varLambda)\setminus\tilde{S})=0$ and $\mu\pi^{-1}(\tilde{\gamma}_{k}) >0$, $k\in K$.
 Let $\mu_{k}$ be the restriction of $\mu$ to  $ \mathbf{B}(\pi^{-1}(\{ \tilde{\gamma}_{k}\}))$, $p_k$ 
 be the restriction of $p\in \mathbf{P}(H)$ to  $\pi^{-1}(\{\tilde{\gamma}_{k}\})=:B_{k}$, and let   $\mathbf{P}_{k}(H)$
be the linear space of all such restrictions, $k\in K$. Then 
$$
\int_{\varGamma}|f(\gamma)|^{\alpha} \mu(d\gamma)=\sum_{k\in K}\int_{B_{k}}|f(\gamma)|^{\alpha} \mu_{k}(d\gamma)
$$
for all $f \in L^\alpha(\mu)$. It follows that if  $\mathbf{P}(H)$ is dense in $L^\alpha(\mu)$, then for all $k\in K$
the space  $\mathbf{P}_{k}(H)$ is dense in $L^\alpha(\mu_{k})$. Since the functions of $\mathbf{P}_{k}(H)$ are constants, 
the measure  $\mu_{k}$ has the form  $\mu_{k}=a_{k}\delta_{\gamma_{k}}$ for some  $a_{k} \in (0,\infty)$, 
$\gamma_{k}\in \pi^{-1}(\{\tilde{\gamma}_{k}\})$, $k\in K$. The set $D:=\{\gamma_{k}: k\in K\}$ belongs to $ \mathbf{B}(\varGamma)$,
$\mu(\varGamma\setminus D)=0$, and $D\cap(\lambda+D)= \emptyset$ for all $\lambda\in \varLambda\setminus\{0\}$.
\end{proof}
The preceding proof is based on the fact that if $\mu\pi^{-1}$ is  discrete, then the space  $L^\alpha(\mu)$ can be written 
as a direct sum of certain $L^\alpha$-spaces over $\varLambda$-cosets. Assuming that $\varGamma$ is a Polish space 
and applying the theory of transition probabilities, this idea can be generalized as follows.
\begin{lem}\label{l4.2} 
If $\varGamma$ is a   Polish space, then $\varGamma/\varLambda$ is a Polish space.
\end{lem}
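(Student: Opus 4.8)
The plan is to check the three ingredients of the Polish property for $\varGamma/\varLambda$ — separability, metrizability, and topological completeness — one at a time, exploiting that the canonical map $\pi:\varGamma\to\varGamma/\varLambda$ is continuous, surjective and \emph{open}, and that $\varGamma/\varLambda$ is a locally compact Hausdorff group. Hausdorffness holds because $\varLambda$ is closed; local compactness holds because $\pi$ maps a compact neighbourhood of $0$ in $\varGamma$ onto a compact neighbourhood of the identity coset in $\varGamma/\varLambda$; alternatively one may simply invoke that $\varGamma/\varLambda$ is topologically isomorphic to the dual group of $H$, hence an LCA group, cf. \cite{Hewitt63}.

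First I would observe that $\varGamma/\varLambda$ is second countable. A Polish space is separable and metrizable, hence has a countable base $(B_n)_{n\in\mathbb N}$; then $(\pi(B_n))_{n\in\mathbb N}$ is a countable family of open subsets of $\varGamma/\varLambda$ (openness of $\pi$), and it is a base, since for an open $V\subseteq\varGamma/\varLambda$ and $\tilde\gamma\in V$ one may pick $\gamma\in\pi^{-1}(\{\tilde\gamma\})\subseteq\pi^{-1}(V)$ and an index $n$ with $\gamma\in B_n\subseteq\pi^{-1}(V)$, whence $\tilde\gamma\in\pi(B_n)\subseteq V$. In particular $\varGamma/\varLambda$ is separable, and it is also $\sigma$-compact.

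The heart of the matter is topological completeness, which I would reach by passing to the one-point compactification $Z$ of $\varGamma/\varLambda$. As $\varGamma/\varLambda$ is locally compact and Hausdorff, $Z$ is compact and Hausdorff, and, $\varGamma/\varLambda$ being second countable and $\sigma$-compact, $Z$ is again second countable. By Urysohn's metrization theorem $Z$ is metrizable; in particular $\varGamma/\varLambda$ is metrizable, and being compact and metrizable, $Z$ is itself a Polish space. Since $\varGamma/\varLambda$ is open in $Z$, it is a $G_\delta$ subset of the Polish space $Z$, and a $G_\delta$ subset of a Polish space is again Polish; this finishes the argument.

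I expect the genuinely delicate point to be the completeness of the metric rather than metrizability or separability: a compatible metric produced directly from Birkhoff--Kakutani (or from Urysohn's theorem) need not be complete, as $\mathbb Q$ shows, so the proof must bring in local compactness somewhere, which is precisely the role of the compactification $Z$. An equivalent variant avoiding $Z$ is to fix any compatible metric on $\varGamma/\varLambda$, pass to its metric completion $\widehat M$, use that a dense locally compact Hausdorff subspace of a Hausdorff space is open in order to conclude that $\varGamma/\varLambda$ is open — hence $G_\delta$ — in the complete metric space $\widehat M$, and then appeal to the same $G_\delta$-subspace theorem.
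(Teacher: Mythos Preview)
Your argument is correct and genuinely different from the paper's. The paper constructs an explicit complete metric on $\varGamma/\varLambda$: it fixes an invariant compatible metric $\sigma$ on $\varGamma$ (citing \cite[(8.3)]{Hewitt63}), forms the quotient pseudometric $\tilde\sigma(\tilde\gamma,\tilde\beta):=\inf\{\sigma(\gamma,\beta):\gamma\in\pi^{-1}\{\tilde\gamma\},\ \beta\in\pi^{-1}\{\tilde\beta\}\}$, and then proves $\tilde\sigma$ is complete by lifting a $\tilde\sigma$-Cauchy sequence to a $\sigma$-Cauchy sequence and invoking Klee's theorem \cite{Klee52} that an invariant metric compatible with a completely metrizable group topology is itself complete. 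Separability is handled, as in your proof, by pushing a dense set forward along $\pi$.

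Your route via the one-point compactification and the $G_\delta$-subspace theorem trades the explicit metric for softer, more portable tools: it uses only that $\varGamma/\varLambda$ is second countable and locally compact Hausdorff, so it would apply verbatim to any such space, not just topological groups. The paper's approach, by contrast, is tied to the group structure (invariant metric, Klee's theorem) but has the advantage of producing a concrete complete metric on the quotient, which can be convenient if one later needs to argue with that metric directly. Both arguments identify the same non-trivial point---that metrizability alone is not enough and local compactness must enter to secure completeness---and resolve it by different means.
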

\begin{proof}
A Polish space $\varGamma$ can be metrized by means of an invariant metric $\sigma$, cf.  \cite[(8.3)]{Hewitt63}. 
Therefore, the topology of $\varGamma/\varLambda$ can be metrized by a metric $\tilde{\sigma}$ defined by 
$$
\tilde{\sigma}(\tilde{\gamma},\tilde{\beta}):=\inf\{ \sigma(\gamma,\beta): \gamma\in \pi^{-1}\{\tilde{\gamma}\} ,
\beta\in \pi^{-1}\{\tilde{\beta}\}
\},
$$
cf. \cite[(8.14)(a), (b)]{Hewitt63}. Let $(\tilde{\gamma}_{n})_{n\in\mathbb{N} }$ be a Cauchy sequence with respect 
to the metric   $\tilde{\sigma}$. Since $\sigma$ is an invariant metric, one can construct  a Cauchy sequence
 $(\gamma_{n})_{n\in\mathbb{N} }$  such that $\gamma_{n} \in \pi^{-1}\{\tilde{\gamma}_n\}$ and 
 $$
 \sigma(\gamma_n,\gamma_{n+1}) < \tilde{\sigma}(\tilde{\gamma}_{n},\tilde{\gamma}_{n+1})+\frac{1}{2^{n}},\; n\in\mathbb{N}.
$$
Since the topology of $\varGamma$
can be metrized  by means of a complete metric, a theorem of Klee \cite[(2.4)]{Klee52} asserts that $\sigma$ is complete. It follows 
$$
\lim_{n\to \infty}\sigma(\gamma_0,\gamma_{n})=0
$$
for some $\gamma_{0}\in \varGamma$, hence,
$$
\lim_{n\to \infty}\tilde{\sigma}(\tilde{\gamma}_{0},\tilde{\gamma_{n}})=0,
$$
which means that $\tilde{\sigma}$ is complete.
Clearly, the image of a dense subset of $\varGamma$ under the map $\pi$
is dense  in $\varGamma/\varLambda$. Thus, if $\varGamma$ is separable, then $\varGamma/\varLambda$ is separable.
\end{proof}
\begin{lem}\label{l4.3}
Let $\varGamma$ be a   Polish space. A measure $\mu$ on  $ \mathbf{B}(\varGamma)$ is concentrated on a transversal if and only if there exists a  
$( \mathbf{B}(\varGamma/\varLambda)$, $ \mathbf{B}(\varGamma))$-measurable function $\tau:\varGamma/\varLambda \to \varGamma$
satisfying $\tau(\tilde{\gamma})\in \pi^{-1}(\{\tilde{\gamma} \})$, $\tilde{\gamma}\in \varGamma/\varLambda$, and 
$$
\int_{\varGamma}f(\gamma)\mu(d\gamma)=\int_{\varGamma/\varLambda}\int_{\varGamma} f(\gamma) \delta_{\tau(\tilde{\gamma})}(d\gamma) \mu\pi^{-1}
(d\tilde{\gamma})
$$
for all $f \in L^{1}(\mu)$.
\end{lem}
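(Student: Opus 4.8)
The plan is to prove the two implications separately, the ``only if'' part requiring the construction of a suitable Borel cross-section and the ``if'' part following from a short measure-theoretic argument. For the ``only if'' direction, assume $\mu$ is concentrated on a transversal, say there is $D\in\mathbf{B}(\varGamma)$ with $\mu(\varGamma\setminus D)=0$ and $D\cap(\lambda+D)=\emptyset$ for all $\lambda\in\varLambda\setminus\{0\}$. First I would apply Lemma~\ref{l3.6} to obtain a Borel set $A\subseteq D$ with $\mu(D\setminus A)=0$ and $\pi^{-1}(\pi(A))\in\mathbf{B}(\varGamma)$; then $A$ still satisfies $A\cap(\lambda+A)=\emptyset$ for $\lambda\ne 0$ and $\mu(\varGamma\setminus A)=0$. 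Since $\varGamma$ is Polish it is metrizable, hence so is the subgroup $\varLambda$, and Theorem~\ref{Th3.5} provides a transversal $R\in\mathbf{B}(\varGamma)$. By Lemma~\ref{l3.4} the set $T:=A\cup[R\cap(\varGamma\setminus\pi^{-1}(\pi(A)))]$ is a transversal, and by the preceding steps $T\in\mathbf{B}(\varGamma)$; the second part of Theorem~\ref{Th3.5} then yields a $(\mathbf{B}(\varGamma/\varLambda),\mathbf{B}(T))$-measurable cross-section $\tau$, which we regard as a $(\mathbf{B}(\varGamma/\varLambda),\mathbf{B}(\varGamma))$-measurable map with $\tau(\tilde\gamma)\in\pi^{-1}(\{\tilde\gamma\})$ for all $\tilde\gamma$.

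To complete this direction I would verify the integral identity. Set $q:=\tau\circ\pi:\varGamma\to\varGamma$. For $\gamma\in A\subseteq T$ the point $q(\gamma)$ is the unique element of the transversal $T$ lying in the coset $\gamma+\varLambda$, so $q(\gamma)=\gamma$; hence $q$ agrees with the identity on $A$, and therefore $\mu$-almost everywhere. It follows that $\mu(q^{-1}(B))=\mu(B)$ for every $B\in\mathbf{B}(\varGamma)$, that is, $\mu\pi^{-1}\tau^{-1}=\mu$, since $q^{-1}(B)=\pi^{-1}(\tau^{-1}(B))$. Now I would apply Corollary~\ref{c2.4} with $X=\varGamma$, $Y=\varGamma/\varLambda$, $\psi=\tau$ and $\tilde\mu=\mu\pi^{-1}$, which is permissible because $\varGamma/\varLambda$ is Polish by Lemma~\ref{l4.2} and $\mu\pi^{-1}$ is a measure by Lemma~\ref{l2.1}(ii): the measure it produces equals $\mu\pi^{-1}\tau^{-1}=\mu$, and the accompanying integration formula is exactly the displayed identity, valid for all $f\in L^1(\mu)$.

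For the ``if'' direction, suppose such a $\tau$ exists and put $T:=\tau(\varGamma/\varLambda)$. From $\tau(\tilde\gamma)\in\pi^{-1}(\{\tilde\gamma\})$ we get $\pi\circ\tau=\mathrm{id}$, so $\tau$ is injective and $T$ meets each $\varLambda$-coset exactly once; thus $T$ is a transversal and, by Lemma~\ref{l3.3}, $T\cap(\lambda+T)=\emptyset$ for all $\lambda\in\varLambda\setminus\{0\}$. Next I would check that $T$ is Borel: indeed $T=\{\gamma\in\varGamma:(\tau\circ\pi)(\gamma)=\gamma\}$, and since $\varGamma$ is separable metrizable the diagonal of $\varGamma\times\varGamma$ is closed, hence Borel, while $\gamma\mapsto(\gamma,(\tau\circ\pi)(\gamma))$ is measurable, so $T\in\mathbf{B}(\varGamma)$. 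Finally, applying the hypothesised identity to $f=1_T$ and using $\tau(\tilde\gamma)\in T$ for every $\tilde\gamma$ gives $\mu(T)=\mu\pi^{-1}(\varGamma/\varLambda)=\mu(\varGamma)$, so $\mu(\varGamma\setminus T)=0$; hence $\mu$ is concentrated on the transversal $T$.

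The main obstacle is the ``only if'' part: one has to manufacture a Borel cross-section whose values lie in a $\mu$-conull subset of the set carrying $\mu$, so that $\tau\circ\pi$ coincides with the identity $\mu$-almost everywhere; this is precisely where the Feldman--Greenleaf theorem together with the gluing Lemmas~\ref{l3.4} and~\ref{l3.6} do the real work. The ``if'' direction is comparatively routine, the only mildly delicate point being the Borel measurability of the fixed-point set $T$, for which the Hausdorff property and second countability of $\varGamma$ (both consequences of being Polish) already suffice.
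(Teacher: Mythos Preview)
Your argument is correct in both directions. The ``only if'' part follows the paper almost verbatim: you invoke Lemma~\ref{l3.6}, Theorem~\ref{Th3.5} and Lemma~\ref{l3.4} to build a Borel transversal $T$ containing a $\mu$-conull piece of $D$, take the associated measurable cross-section $\tau$, and then identify the measure produced by Corollary~\ref{c2.4} with $\mu$. The paper phrases the last identification as a chain $\varrho(B)=\mu\pi^{-1}(\tau^{-1}(B))=\mu(B\cap D)=\mu(B)$, while you argue that $\tau\circ\pi$ equals the identity $\mu$-a.e.; these are the same computation.

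Where you genuinely diverge is in the ``if'' direction. The paper does not try to show that the image $\tau(\varGamma/\varLambda)$ is Borel; instead it quotes a measurable-image result \cite[24.23]{Part77} to extract a Borel set $D\subseteq\tau(\varGamma/\varLambda)$ with $\mu\pi^{-1}(\tau^{-1}(\varGamma\setminus D))=0$, and then uses Corollary~\ref{c2.4} to conclude $\mu(\varGamma\setminus D)=0$. Your route is more elementary: you observe that $T=\tau(\varGamma/\varLambda)=\{\gamma:(\tau\circ\pi)(\gamma)=\gamma\}$ is the preimage of the (closed) diagonal under the measurable map $\gamma\mapsto(\gamma,(\tau\circ\pi)(\gamma))$, hence Borel, and then test the integral identity on $f=1_T$. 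This avoids the external selection theorem entirely, at the modest cost of needing $\mathbf{B}(\varGamma\times\varGamma)=\mathbf{B}(\varGamma)\otimes\mathbf{B}(\varGamma)$, which holds because $\varGamma$ is second countable. Both approaches yield the same conclusion; yours is self-contained, while the paper's works even without knowing that the full image $\tau(\varGamma/\varLambda)$ is Borel.
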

\begin{proof}
If there exists a function $\tau$ with the described properties, according to \cite[24.23]{Part77}
there exists a set $D\in  \mathbf{B}(\varGamma)$ with $D\subseteq \tau (\varGamma/\varLambda)$ and we have 
$\mu\pi^{-1}(\tau^{-1}(\varGamma\setminus D))=0$.
Since $\tau\left( \varGamma/\varLambda\right)$ contains just one element from each  $\varLambda$-coset, one  has 
$D\cap (\lambda+D)=\emptyset$ for $\lambda \in \varLambda\setminus\{0\}$. The equality 
$\mu(\varGamma\setminus D)=\mu\pi^{-1}(\tau^{-1}(\varGamma\setminus D))= 0$ is a consequence of Corollary~\ref{c2.4}.
Assume, conversely, that there exists $D\in  \mathbf{B}(\varGamma)$ such that $\mu(\varGamma\setminus D)=0$ and  
$D\cap (\lambda+D)=\emptyset$ for $\lambda \in \varLambda\setminus\{0\}$. Choose  a transversal $R\in \mathbf{B}(\varGamma)$
according to Theorem~\ref{Th3.5} and construct a transversal  $T:=D\cup [R\cap(\varGamma\setminus\pi ^{-1}(\pi(D))], T \in \mathbf{B}(\varGamma),$
by Lemmas~\ref{l3.4} and \ref{l3.6}. Let $\tau$ be a corresponding 
$(\mathbf{B}(\varGamma/\varLambda),\mathbf{B}(\varGamma) )$-measurable cross-section and define a measure $\varrho$ by 
$$
\varrho(B):=\int_{\varGamma/\varLambda}\int_{\varGamma}
1_{B\times \varGamma/\varLambda}(x,\tilde{\beta}) \delta_{\tau(\tilde{\beta} )}(d\gamma)\mu\pi^{-1}(d\tilde{\beta}), B\in\mathbf{B}(\varGamma),
$$
according to Corollary~\ref{c2.4}. Since
$$
\varrho(B)=\mu\pi^{-1}(\tau^{-1}(B))=\mu\pi^{-1}(\tau^{-1}(B\cap D))=\mu(B\cap D)=\mu(B), B\in\mathbf{B}(\varGamma),
$$
and since
$\tau(\tilde{\gamma})\in \pi^{-1}(\{ \tilde{\gamma}\})$, for all $\tilde{\gamma}\in \varGamma/\varLambda$, it follows
\begin{eqnarray*}
\int_{\varGamma}f(\gamma) \mu(d\gamma) &=&\int_{\varGamma}f(\gamma) \varrho(d\gamma)
=\int_{\varGamma/\varLambda}\int_{\varGamma}
f(\gamma) \delta_{\tau(\tilde{\beta} )}(d\gamma)\mu\pi^{-1}(d\tilde{\beta})
\\&=&\int_{\varGamma/\varLambda}\int_{\varGamma}
f(\gamma) \delta_{\tau(\tilde{\gamma} )}(d\gamma)\mu\pi^{-1}(d\tilde{\gamma})
\end{eqnarray*}
by Corollary~\ref{c2.4}.
\end{proof}
\begin{sat}\label{Th4.4}
Let $\mu$ be a measure on $ \mathbf{B}(\varGamma)$ and $\alpha \in (0,\infty)$. 
Assume that $\varGamma$ is a Polish space. The set $\mathbf{P}(H)$ is dense in   $L^\alpha(\mu)$ if and only if the measure $\mu$
is concentrated on a transversal.
\end{sat}
\begin{proof}
The ``if-part'' is a special case of Theorem~\ref{Th3.8}. Assume, conversely, that 
 $\mathbf{P}(H)$ is dense in $L^\alpha(\mu)$ for some $\alpha \in (0,\infty)$.  It is enough to show that there exists a function 
 $\tau: \varGamma/\varLambda \to \varGamma$ satisfying the conditions of Lemma~\ref{l4.3}. Choose a transition probability $w$ on
 $ (\varGamma/\varLambda) \times \mathbf{B}(\varGamma)$ satisfying conditions (i) and (ii) of Theorem~\ref{Th2.5} and a set 
 $\tilde{B}\in  \mathbf{B}( \varGamma/\varLambda )$ such that $\tilde{B}_0\subseteq\tilde{B}$, 
 $\mu\pi^{-1}(\tilde{B})= 0$ and  $\mathbf{P}(H)$ is dense in $L^\alpha(w(\tilde{\gamma},\cdot))$ for all 
 $\tilde{\gamma}\in (\varGamma/\varLambda)\setminus \tilde{B}$, cf. Corollary~\ref{c2.6}. Since the functions of $\mathbf{P}(H)$
 are constant on each $\varLambda$-coset, there exists a function $\psi: (\varGamma/\varLambda)\setminus \tilde{B}\to \varGamma$
 satisfying $w(\tilde{\gamma},\cdot)= \delta_{\psi(\tilde{\gamma} )}(\cdot)$ for all 
 $\tilde{\gamma}\in (\varGamma/\varLambda)\setminus \tilde{B}$. It is not difficult  to see that $\psi$ is 
  $(\mathbf{B}(\varGamma/\varLambda),\mathbf{B}(\varGamma) )$-measurable. Choose an arbitrary  
  $(\mathbf{B}(\varGamma/\varLambda),\mathbf{B}(\varGamma) )$-measurable cross-section $\chi$ according to Theorem~\ref{Th3.5}.
  The function $\tau$, which is equal to $\psi$ on $(\varGamma/\varLambda)\setminus \tilde{B}$ and equal to $\chi$ on $\tilde{B}$
  has all  desired properties.
\end{proof}
We conclude the present section  by proving  an analogous result if $\varGamma$ is an arbitrary LCA
group and $\varLambda$ is countable.
\begin{sat}
Let $\mu$ be a measure on $ \mathbf{B}(\varGamma)$ and $\alpha \in (0,\infty)$. 
Assume that $\varLambda$ is countable. The set  $\mathbf{P}(H)$ is dense in   $L^\alpha(\mu)$ if and only if the measure $\mu$
is concentrated on a transversal. 
\end{sat}
 \begin{proof}
 Since $\varLambda$ is countable, it is discrete, cf. \cite[Corollary to Theorem~2]{Morris77}, 
 hence, it is metrizable and the ``if-part'' is a special case of Theorem~\ref{Th3.8}. To prove that $\mu$ is concentrated on a transversal 
 if for some $\alpha \in (0,\infty)$,  $\mathbf{P}(H)$ is dense $L^\alpha(\mu)$, choose a transversal $T\in \mathbf{B}(\varGamma)$
 according to Theorem~\ref{Th3.5}. For $\lambda\in\varLambda$, let $\mu_{\lambda}$ be the restriction of $\mu$ 
 to $ \mathbf{B}(\lambda+T)$, $\tilde{\mu}_{\lambda}$ be the translate of $\mu_{\lambda}$ to $ \mathbf{B}(T)$, i.~e.
 $\tilde{\mu}_{\lambda}(B)= \mu_{\lambda}(\lambda+B)$, $B\in  \mathbf{B}(T)$. Setting
 $\tilde{\mu}_{\lambda}(B):=\tilde{\mu}_{\lambda}(B\cap T)$,  $B\in  \mathbf{B}(\varGamma)$, we can extend  $\tilde{\mu}_{\lambda}$ 
 to $\mathbf{B}(\varGamma)$ and define $ \tilde{\mu}:=\sum_{\lambda\in \varLambda}\tilde{\mu}_{\lambda}$. By Lemma~\ref{l2.1} all 
 measures just defined are regular. Since $ \tilde{\mu}$ is concentrated on a transversal, the linear space  $\mathbf{P}(H)$ is dense in $L^{\alpha}(\tilde{\mu})$. For $\tilde{f}\in L^{\alpha}(\tilde{\mu})$, define 
 $(V \tilde{f})(\gamma):=\tilde{f}(\gamma-\lambda)$ if 
 $\gamma\in \lambda+T$, $\lambda\in \varLambda$. It is not difficult to show that $V$ establishes an isometric isomorphism from 
 $L^\alpha(\tilde{\mu})$ into
 $L^\alpha({\mu})$, that $V$ is the identity on $\mathbf{P}(H)$ and that  $V^{-1}f=f$ $\tilde{\mu}$-a.~e. for all elements $f\in L^\alpha({\mu})$, which belong to the range of $V$, 
 cf. \cite[Lemma~2.2]{Klotz05}. Let $h_{\lambda}$ be a  $(\mathbf{B}(\varGamma),\mathbf{B}([0,\infty) )$-measurable
 Radon-Nikodym derivative of $\tilde{\mu}_{\lambda}$ with respect to $\tilde{\mu}$. 
 We can assume that $h_{\lambda}=0$ on $\varGamma\setminus T$ and define 
 $T_{\lambda}:=\{ \gamma\in \varGamma :h_{\lambda}(\gamma)\neq 0\}$, 
 $T_{\lambda\kappa}:=T_{\lambda}\cap T_{\kappa}$, 
 $\lambda,\kappa\in \varLambda$, $\lambda\neq \kappa$. Since $\mathbf{P}(H)$ is assumed to be dense in $L^\alpha(\mu)$, there exists
 $\tilde{f}\in L^{\alpha}(\tilde{\mu})$ such that $V\tilde{f}=1_{\lambda+T_{\lambda\kappa}}$. From the definition of 
 $V\tilde{f}$ it follows that for  $\tilde{\mu}$-a.~a. $\gamma\in T_{\lambda\kappa}$ one has 
 $\tilde{f}(\gamma)=1_{\lambda+T_{\lambda\kappa}}(\lambda+\gamma)=1$ as well as 
 $\tilde{f}(\gamma)=1_{\kappa+T_{\lambda\kappa}}(\kappa+\gamma)=0$, which yields $\tilde{\mu}(T_{\lambda\kappa})=0$. Set
 $T_{\lambda}'=\bigcup_{\kappa\in \varLambda\setminus \{\lambda\}} T_{\lambda\kappa}$, $T_{\lambda}^{d}:=T_{\lambda}\setminus T_{\lambda}'
 $, $\lambda\in \varLambda$, $D:= \bigcup_{\lambda\in \varLambda}(\lambda+T_{\lambda}^{d})$. Then 
 \begin{eqnarray*}
 \mu(\varGamma\setminus D)&=&\mu\left(\bigcup_{\lambda\in \varLambda}(\lambda+T)\setminus
 \bigcup_{\lambda\in \varLambda}(\lambda+T_{\lambda}^{d})\right)
 \\&=& \mu\left(\bigcup_{\lambda\in \varLambda}((\lambda+T)\setminus(\lambda+T_{\lambda}^{d})\right)
 = \sum_{\lambda\in \varLambda}\mu\left((\lambda+T)\setminus(\lambda+T_{\lambda}^{d})\right)
 \\&=& \sum_{\lambda\in \varLambda}\mu\left(\lambda+(T\setminus T_{\lambda}^{d})\right)= \sum_{\lambda\in \varLambda}\tilde{\mu}_{\lambda}(T\setminus T_{\lambda}^{d})
 = \sum_{\lambda\in \varLambda}\tilde{\mu}_{\lambda}\left((T\setminus (T_{\lambda}\setminus T_{\lambda}')\right))
 \\&=& \sum_{\lambda\in \varLambda}\tilde{\mu}_{\lambda}\left((T\setminus T_{\lambda})\cup T_{\lambda}'\right)
 = \sum_{\lambda\in \varLambda}\tilde{\mu}_{\lambda}\left(T\setminus T_{\lambda}\right)
 + \sum_{\lambda\in \varLambda}\tilde{\mu}_{\lambda}\left(T_{\lambda}'\right)=0.
 \end{eqnarray*}
If $D\cap(\lambda+D)\neq \emptyset$ for some $\lambda\in \varLambda$, there exist $\lambda_{j}\in \varLambda, \: 
\gamma_{j}\in T_{\lambda_{j}}\setminus T_{\lambda_{j}}'$, $j\in \{1,2\}$, such that
$\lambda_{1}+\gamma_{1}=\lambda +\lambda_{2}+\gamma_{2}$, hence, $ \lambda_{1}+(T\cap(\lambda+\lambda_{2}-\lambda_{1}+T))\neq \emptyset $.
Since $T$ is a transversal, we get $\lambda_{1}=\lambda+\lambda_{2}$ and then $\gamma_{1}=\gamma_{2}$, which implies that $\lambda=0$.
Thus, $\mu$ is concentrated on a transversal.
 \end{proof}

\newpage

\noindent
J. M. Medina\\
Inst. Argentino  de Matem\'atica ``A. P. Calder\'on''- CONICET and Universidad de Buenos Aires-Departamento de Matem\'{a}tica.\\
Saavedra 15, 3er piso (1083)\\
Buenos Aires, Argentina. Contact: jmedina@fi.uba.ar\\
~\\
L. Klotz and M. Riedel\\ 
Fakultät für Mathematik und Informatik\\
Universität Leipzig\\
04109 Leipzig, Germany. Contact: Lutz-Peter.Klotz@math.uni-leipzig.de
\end{document}